\newcommand{\BB}{{\mathcal B}}
\newcommand{\EE}{{\mathcal E}}
\newcommand{\BD}{{\mathbb D}}
\newcommand{\BM}{{\mathbb M}}
\newcommand{\BR}{{\mathbb R}}
\newcommand{\BBr}{{\mathscr B}}
\newcommand{\fch}{{\mathbf{1}}}
\newtheorem{theorem}{\bf Theorem}[section]
\newtheorem{proposition}[theorem]{\bf Proposition}
\newtheorem{lemma}[theorem]{\bf Lemma}
\theoremstyle{definition}
\newtheorem{definition}[theorem]{Definition}
\newtheorem{remark}[theorem]{Remark}
\numberwithin{equation}{section}
\begin{document}

\title {On approximation of the Dirichlet problem for divergence form operator
by Robin problems\footnote{Supported by Polish National Science Centre (grant no. 2016/23/B/ST1/01543).}}
\author {Andrzej Rozkosz and Leszek S\l omi\'nski\\
{\small Faculty of Mathematics and Computer Science,
Nicolaus Copernicus University} \\
{\small  Chopina 12/18, 87--100 Toru\'n, Poland}}
\date{}
\maketitle
\begin{abstract}
We show that, under natural assumptions solutions, of  Dirichlet problems for uniformly elliptic divergence form operator can be approximated pointwise by solutions of some versions of Robin problems.
The proof is based on stochastic representation of solutions  and properties of reflected diffusions corresponding to divergence form operators.
\end{abstract}
{\small \noindent{\bf Keywords:}  Robin problem, Dirichlet problem, divergence form operator, stochastic representation, reflected diffusion.
\smallskip
\\
{\bf AMS MSC 2010:} 35J25, 60H30.}

\section{Introduction}
\label{sec1}

Let $D$ be a bounded Lipschitz domain in $\BR^d$, $d\ge3$, and
\[
L=\sum^d_{i,j=1}\partial_{x_i}(a_{ij}(x)\partial_{x_j})
\]
be
the operator with measurable coefficients $a_{ij}:D\rightarrow\BR$ such that
\begin{equation}
\label{eq1.1}
a_{ij}=a_{ji},\qquad \Lambda^{-1}|\xi|^2\le \sum^{d}_{i,j=1}a_{ij}(x)\xi_i\xi_j\le\Lambda|\xi|^2,\quad x\in D,\,\,\xi\in\BR^d,
\end{equation}
for some $\Lambda\ge1$. For $\lambda>0$, $f:D\rightarrow\BR$, $g:\partial D\rightarrow\BR$ and $n\ge1$
we consider the following  boundary-value problem
\begin{equation}
\label{eq1.2}
-Lu_n+\lambda u_n=f\quad\mbox{in }D,\qquad-(a\nabla u_n)\cdot\mathbf{n}
+nu_n=ng\quad \mbox{on }\partial D,
\end{equation}
where  $a=\{a_{ij}\}_{1\le i,j\le d}$ and $\mathbf{n}(x)$ is the inward unit normal at $x\in \partial D$. Note that (\ref{eq1.2}) is a particular version of Robin problem (also known as Fourier problem 
or the third boundary-value problem).
It is known (see, e.g., \cite[Appendix I, Section 4.4]{G}) that if $f\in L^2(D)$, $g\in H^1(D)$
and in the boundary condition in (\ref{eq1.2}) the trace of $g$ i used, then for  each $n\ge1$ there exists a unique weak solution of (\ref{eq1.2}) and $u_n\rightarrow u$ in $H^1(D)$ as $n\rightarrow\infty$, where $u$ is the unique weak solution of the Dirichlet problem
\begin{equation}
\label{eq1.3}
-Lu+\lambda u=f\quad\mbox{in }D,\qquad u=g\quad\mbox{on }\partial D.
\end{equation}

If $f\in L^p(D)$ with $p>d$ and $g\in H^1(D)\cap C(\partial D)$,
then $u_n,u$ have continuous versions  and one may ask whether
$u_n\rightarrow u$ for every $x\in\bar D$. In this note, we give positive answer to this question.
Our proof is quite simple and is based on stochastic representation of solutions of (\ref{eq1.2}), (\ref{eq1.3}). But let us stress that in the proof of our convergence results we use  deep results from \cite{FT1, FT2} (see also \cite{BH} for the case $L=(1/2)\Delta$) saying that one can construct a reflected diffusion $\BM$ on $\bar D$  associated with $L$ having a strong Feller resolvent.

\section{Preliminaries}

In this paper,  $D\subset \BR^d$, $d\ge3$, is a bounded Lipschitz domain (for a definition see, e.g., \cite[Exercise 5.2.2]{FOT}), $\bar D=D\cup\partial D$. We denote by $m$ or simply by $dx$  the $d-$dimensional Lebesgue measure, and by $\sigma$ the surface measure on $\partial D$. $\BB(\bar D)$ is the set of Borel subsets of $\bar D$, $\BB_b(\bar D)$ (resp. $C(\bar D)$) is the set of bounded Borel (resp. continuous) functions on $\bar D$.
To shorten notation, we write $L^2(D)$ instead of $L^2(D;m)$ and $L^2(\partial D)$ instead of $L^2(\partial D;\sigma)$.

We assume that  the matrix $a$ satisfies (\ref{eq1.1}) and  consider the Dirichlet form $(\EE,D(\EE))$ on $L^2(D)$ defined by
\begin{equation}
\label{eq2.2}
\EE(u,v)=\sum^d_{i,j=1}\int_Da_{ij}(x)\frac{\partial u}{\partial x_i}(x)
\frac{\partial v}{\partial x_j}(x)\,dx,\quad u,v\in D(\EE):=H^1(D),
\end{equation}
where $H^1(D)$ is the usual Sobolev space of order 1, and for $\lambda>0$ set $\EE_{\lambda}(u,v)=\EE(u,v)+\lambda(u,v)$, where $(\cdot,\cdot)$ is the usual inner product in $L^2(D;m)$. We denote by $(T_t)_{t>0}$ the strongly continuous semigroup of Markovian symmetric operators on $L^2(D)$ associated with $\EE$ (see \cite[Section 1.3]{FOT}).

In the paper, we define quasi-notions (exceptional sets, quasi-continuity) with respect to $(\EE,H^1(D))$. We will say that a property of points in $\bar D$ holds quasi everywhere (q.e. in abbreviation) if it holds outside some exceptional set. It is known (see \cite[Lemma 2.1.4, Theorem 2.1.3]{FOT}) that each element of $H^1(D)$ admits a quasi-continuous $m$-version, which we denote by $\tilde u$, and $\tilde u$ i q.e. unique for every $u\in H^1(D)$.

In \cite[Theorems 2.1 and 2.2]{FT2} (see also \cite{FT1})
it is proved that  there exists a conservative diffusion proces
$\BM=\{(X,P_x),x\in\bar D\}$ on $\bar D$ associated with the Dirichlet form (\ref{eq2.2})
in the sense that the transition density of $\BM$ defined as
\[
p_t(x,B)=P_x(X_t\in B),\quad t>0,\,x\in \bar D,\quad B\in\BBr(\bar D),
\]
has the property that
\[
P_tf\quad\mbox{is an $m$-version of}\quad T_tf\quad\mbox{for every }f\in\BB_b(\bar D),
\]
where $P_tf(x)=\int_Df(y)p_t(x,dy)=E_xf(X_t)$. Moreover,  $(P_t)_{t>0}$ is strongly Feller in the sense that $P_t(\BB_b(\bar D))\subset C(\bar D)$ and $\lim_{t\downarrow0}P_tf(x)=f(x)$ for $x\in\bar D$, $f\in C(\bar D)$. In particular (see \cite[Exercise 4.2.1]{FOT}), the transition density satisfies the following absolute continuity condition:
$p_t(x,\cdot)\ll m$ for any $t>0$, $x\in\bar D$.

We denote by $(R_{\alpha})_{\alpha>0}$ the resolvent associated with $\BM$ (or with $(P_t)_{t>0})$, that is
\[
R_{\alpha}f(x)=E_x\int^{\infty}_0e^{-\alpha t}f(X_t)\,dt, \quad f\in\BB_b(\bar D).
\]
Of course
\[
R_{\alpha}f(x)=\int_{\bar D}r_{\alpha}(x,y)f(y)\,dy,\quad\mbox{where}\quad  r_{\alpha}(x,y)=\int^{\infty}_0e^{-\alpha t}p_t(x,y)\,dt.
\]
For a Borel measure $\mu$ on $\bar D$ we also set
\[
R_{\alpha}\mu(x)=\int_{\bar D}r_{\alpha}(x,y)\,\mu(dy),\quad x\in\bar D,\quad \alpha>0,
\]
whenever the integral makes sense.

By \cite[Lemma 5.1, Theorem 5.1]{FT2},  the surface measure $\sigma$ belongs to the space of smooth measures in the strict sense, and hence, by \cite[Theorem 5.1.7]{FOT}, there is  a unique positive continuous additive functional of $\BM$ in the strict sense with Revuz measure $\sigma$. In what follows we denote it by $A$. For $g\in\BB_b(\bar D)$ let  $g\cdot\sigma$ be the measure on $\bar D$ defined by  $g\cdot\sigma(B)=\int_Bg(x)\,\sigma(dx)$, $B\in\BB(\bar D)$. Note that for any $g\in\BB_b(\bar D)$ we have
\[
R_{\alpha}(g\cdot\sigma)(x)=E_x\int^{\infty}_0e^{-\alpha t}g(X_s)\,dA_s,\quad x\in\bar D.
\]
Indeed, by \cite[Theorem 5.1.3]{FOT} the above equality holds for $m$-a.e. $x\in \bar D$, and hence, by \cite[Theorem A.2.17]{CF}, for every $x\in\bar D$ because $p_t$ satisfies the absolute continuity condition and for any nonnegative $g\in\BB_b(\bar D)$, both sides of the above equality are  $\alpha$-excessive functions.
Also note that the support of  $A$ is contained in $\partial D$. Hence
\begin{equation}
\label{eq2.4}
\int^t_0g(X_s)\,dA_s=\int^t_0\fch_{\partial D}(X_s)g(X_s)\,dA_s,\quad P_x\mbox{a.s.},\quad x\in\bar D
\end{equation}
(for more details see the beginning of the proof of Lemma \ref{lem4.1}).  It follows that in fact the right-hand side of (\ref{eq2.4}) is well defined for $g\in\BB_b(\partial D)$.

\begin{remark}
If, in addition, $\frac{\partial a_{ij}}{\partial x_i}\in L^{\infty}(D)$, $i,j=1,\dots,d$, then $X=(X^1,\dots,X^d)$ has the following Skorohod representation: for $i=1,\dots,d$ and every $x\in\bar D$
\begin{equation}
\label{eq2.8}
X^i_t=X_0^i+M^i_t+N^i_t,\quad t\ge0,\quad P_x\mbox{-a.s.},
\end{equation}
where $M^i$ are martingale additive functionals in the strict sense with covariations
\[
\langle M^i,M^j\rangle_t=2\int^t_0a_{ij}(X_s)\,ds, \quad t\ge0,\quad P_x\mbox{-a.s.},
\]
and
\[
N^i_t=\sum^d_{j=1}\int^t_0\frac{\partial a_{ij}}{\partial x_j}(X_s)\,ds
+\sum^d_{j=1}\int^t_0a_{ij}(X_s){\mathbf n}_j(X_s)\,dA_s,\quad t\ge0,\quad P_x\mbox{-a.s.}
\]
In case of the classical Dirichlet form defined by
\[
\BD(u,v)=\frac12\sum_{i=1}\int_D\frac{\partial u}{\partial x_i}(x)
\frac{\partial v}{\partial x_j}(x)\,dx,\quad u,v\in H^1(D),
\]
i.e. if $a=\frac12 I$, the process $\BM$ is called a reflecting Brownian motion. By L\'evy's characterization of Brownian motion, the representation (\ref{eq2.8}) reads
\begin{equation}
\label{eq2.9}
X^i_t-X_0^i=B^i_t+\frac12\int^t_0\mathbf{n}_i(X_s)\,dA_s,\quad t\ge0,\quad P_x\mbox{-a.s.},
\end{equation}
where $B=(B^1,\dots,B^d)$ is a standard Brownian motion. For the proof of (\ref{eq2.9}) see \cite[Example 5.2.2]{FOT} and for the general case (\ref{eq2.8}) see \cite[Theorem 2.3]{FT2}. In case $a$ is a general function satisfying (\ref{eq1.2}) some representation of $X$ (Lyon's--Zheng--Skorohod decomposition) is given in \cite{RS:SM} (for bounded $C^2$ domain $D$ and $x\in D$).
\end{remark}

Let
\[
\tau_D=\inf\{t>0:X_t\notin D\},\qquad X^D_t=\begin{cases} X_t, &t<\tau_D\\
\partial, &t\ge\tau_D,
\end{cases}
\]
where $\partial$ is a point adjoined to $D$ as an isolated point  (cemetery state). We adopt the convention that every function $f$ on $D$ is extended to $\bar D\cup\partial$ by setting $f(\partial)=0$.

We denote by  $\BM^{\lambda}$  the canonical subprocess
of $\BM$ with respect to the multiplicative functional $e^{-\lambda t}$.
For its detailed construction we refer to \cite[Section A.2]{FOT}. 
Here let us only note that we may assume that $\BM^{\lambda}=(X^{\lambda},P_x)$ is defined on the same probability space on which $\BM$ is defined and
\[
X^{\lambda}_t=
\begin{cases}
X_t, & t<Z/\lambda,\\
\partial, &t\ge Z/\lambda,
\end{cases}
\]
where $Z$ is a nonnegative random variable independent of $(X_t)_{t\ge0}$ having exponential distribution with mean 1.

\section{Weak and probabilistic solutions}

For the convenience of the reader, 
below we recall variational formulation of problems (\ref{eq1.2}), (\ref{eq1.3}). For more details and comments we refer to \cite[Appendix I]{G}.

\begin{definition}
\label{def3.1}
(i) Let $f\in L^2(D)$, $g\in L^2(\partial D)$. A function $u_n\in H^1(D)$ is called a weak solution of (\ref{eq1.2})
if for every  $v\in H^1(D)$,
\begin{equation}
\label{eq2.3}
\EE_{\lambda}(u_n,v)=\int_Dfv\,dx+n\int_{\partial D}(g-u_n)v\,d\sigma.
\end{equation}
(ii) Let $f\in L^2(D)$, $g\in H^1(D)$. A function $u\in H^1(D)$ is called a weak
solution of (\ref{eq1.3}) if $u-g\in H^1_0(D)$ and for every $v\in H^1_0(D)$,
\[
\EE_{\lambda}(u,v)= \int_Dfv\,dx.
\]
\end{definition}

The existence and uniqueness of weak solutions of (\ref{eq1.2}), (\ref{eq1.3}) is well known. For proofs by classical variational methods we refer for instance to \cite[Appendix I]{G}. In Proposition \ref{prop2.4} below we give proofs by using the probabilistic potential theory.  The advantage  of using these less classical methods lies in the fact that they  provide probabilistic representations of quasi-continuous versions of weak solutions. We would like to stress that the proof of Proposition \ref{prop2.4} is  simply a compilation of known facts. We provide it for completeness and later use.


\begin{proposition}
\label{prop2.4}
\begin{enumerate}[\rm(i)]
\item Let $f\in L^2(D)$, $g\in L^2(\partial D)$.  Then there exists a unique weak solution $u_n$ of \mbox{\rm(\ref{eq1.2})} and $\tilde u_n$ defined q.e. on $D$ by
\begin{equation}
\label{eq2.5}
\tilde u_n(x)=E_x\int^{\infty}_0e^{-\lambda t-nA_t} (f(X_t)\,dt+ng(X_t)\,dA_t)
\end{equation}
is a quasi-continuous $m$-version of $u_n$.

\item Let $f\in L^2(D)$, $g\in H^1(D)$. Then there exists a unique weak solution $u$ of \mbox{\rm(\ref{eq1.3})} and $\tilde u$ defined q.e. on $D$ by
\begin{equation}
\label{eq3.7}
\tilde u(x)=E_x\Big(e^{-\lambda\tau_D}g(X_{\tau_D})+\int^{\tau_D}_0e^{-\lambda t} f(X_t)\,dt\Big)
\end{equation}
is a quasi-continuous $m$-version of $u$.
\end{enumerate}
\end{proposition}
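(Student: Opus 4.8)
The plan is to prove both parts by the same two-step scheme: first obtain existence and uniqueness of the weak solution from the Lax--Milgram theorem applied to an appropriate coercive closed form on $H^1(D)$, and then identify the (quasi-continuous version of the) solution with a Feynman--Kac-type functional of $\BM$ by invoking the standard correspondence between regular Dirichlet forms, their resolvents and the associated Hunt processes. In both parts the analytic input is elementary; the work is in the probabilistic identification, and the recurring delicate point is upgrading $m$-a.e. equalities furnished by the Dirichlet form machinery to equalities valid q.e., or even for every $x\in\bar D$, using the strong Feller property and the absolute continuity condition $p_t(x,\cdot)\ll m$ recorded in Section~2.

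\emph{Part (i).} Since $D$ is Lipschitz, the trace operator maps $H^1(D)$ boundedly into $L^2(\partial D)$, so $\EE^{(n)}_\lambda(u,v):=\EE_\lambda(u,v)+n\int_{\partial D}uv\,d\sigma$ is a coercive, closed, symmetric form on $H^1(D)$, and $v\mapsto\int_D fv\,dx+n\int_{\partial D}gv\,d\sigma$ is a bounded linear functional on $H^1(D)$. Lax--Milgram then gives a unique $u_n\in H^1(D)$ satisfying (\ref{eq2.3}), namely $u_n=G^{(n)}_\lambda f+nG^{(n)}_\lambda(g\cdot\sigma)$, where $G^{(n)}_\lambda$ is the resolvent of $\EE^{(n)}$. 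Because $\sigma$ is smooth in the strict sense with associated PCAF $A$, the form $\EE^{(n)}$ is exactly the perturbation of $(\EE,H^1(D))$ by the measure $n\sigma$, i.e. the killing transform of $\BM$ by $nA$, whose semigroup is $f\mapsto E_\cdot[e^{-nA_t}f(X_t)]$ (see, e.g., \cite[Section~6.1]{FOT} and \cite{CF}). Hence $E_x\int_0^\infty e^{-\lambda t-nA_t}f(X_t)\,dt$ is a quasi-continuous $m$-version of $G^{(n)}_\lambda f$ for $f\in L^2(D)$; and, reasoning as in Section~2 via the Revuz correspondence, \cite[Theorem~5.1.3]{FOT} and the absolute continuity of $p_t$, $E_x\int_0^\infty e^{-\lambda t-nA_t}g(X_t)\,dA_t$ is a quasi-continuous $m$-version of $G^{(n)}_\lambda(g\cdot\sigma)$ (finiteness q.e. follows since $|g|\cdot\sigma$ defines a bounded functional on $H^1(D)$ by the trace embedding). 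Adding the two representations shows that $\tilde u_n$ in (\ref{eq2.5}) is a quasi-continuous $m$-version of $u_n$.

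\emph{Part (ii).} Put $w=u-g$. Then (\ref{eq1.3}) in weak form becomes: $w\in H^1_0(D)$ and $\EE_\lambda(w,v)=\int_D fv\,dx-\EE_\lambda(g,v)$ for all $v\in H^1_0(D)$. Since $g\in H^1(D)$, the right-hand side is a bounded functional on $H^1_0(D)$, so Lax--Milgram yields a unique $w$, hence a unique $u$. Write $w=G^D_\lambda f-w_g$, where $G^D_\lambda$ is the resolvent of the part form $(\EE_\lambda,H^1_0(D))$ and $w_g\in H^1_0(D)$ solves $\EE_\lambda(w_g,v)=\EE_\lambda(g,v)$ on $H^1_0(D)$; thus $u=G^D_\lambda f+H^D_\lambda g$ with $H^D_\lambda g:=g-w_g$ the $\lambda$-harmonic function with boundary data $g$. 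The part form $(\EE,H^1_0(D))$ is the Dirichlet form of the killed process $X^D$ (see \cite[Section~4.4]{FOT}), so $E_x\int_0^{\tau_D}e^{-\lambda t}f(X_t)\,dt$ is a quasi-continuous $m$-version of $G^D_\lambda f$, while $E_x\big[e^{-\lambda\tau_D}\tilde g(X_{\tau_D})\big]$, with $\tilde g$ a quasi-continuous version of $g$, is the standard probabilistic representation of $H^D_\lambda g$. Summing gives (\ref{eq3.7}).

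\emph{Main obstacle.} The Lax--Milgram steps and the reductions are routine; the substantive content is the probabilistic identification of $G^{(n)}_\lambda$, $G^D_\lambda$ and $H^D_\lambda g$. Two points need care. First, the Dirichlet form results give the Feynman--Kac formulas only $m$-a.e., and one must argue as in Section~2 (via $\alpha$-excessivity together with $p_t(x,\cdot)\ll m$ and strong Feller) that the formulas hold q.e.\ on $D$ and define quasi-continuous functions. Second, in part (ii) $g$ is only assumed to lie in $H^1(D)$, so $g(X_{\tau_D})$ must be read as $\tilde g(X_{\tau_D})$; this is legitimate because, $\sigma$ being smooth, exceptional subsets of $\partial D$ are polar and hence not charged by the law of $X_{\tau_D}$, again using the absolute continuity condition.
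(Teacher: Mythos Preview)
Your proposal is correct and follows essentially the same route as the paper: in (i) you pass to the perturbed form $\EE^{(n)}_\lambda=\EE_\lambda+n\sigma$ and identify its resolvent with the Feynman--Kac functional of the subprocess killed by $nA$ (the paper cites \cite[(6.1.5),(6.1.12), Lemmas 5.1.5, 6.1.3]{FOT} for exactly this), and in (ii) you use the decomposition $u=R^D_\lambda f+H^\lambda_{\partial D}\tilde g$ together with \cite[Theorems 4.3.1, 4.4.1]{FOT}. The only cosmetic differences are that the paper argues uniqueness directly (subtracting two solutions) rather than invoking Lax--Milgram, and it obtains quasi-continuity by direct citation of the relevant FOT lemmas rather than by the $m$-a.e.-to-q.e.\ upgrade you sketch in your ``Main obstacle'' paragraph.
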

\begin{proof}
(i) Let $(\EE^{n\sigma},D(\EE^{n\sigma}))$ denote the form $\EE$ perturbed by the measure $n{\mathbf{1}}_{\partial D}\cdot\sigma$, that is
\[
\EE^{n\sigma}_{\lambda}(u,v)=\EE_{\lambda}(u,v)+n\int_{\partial D}uv\,d\sigma,\qquad u,v\in D(\EE^{n\sigma}):=H^1(D)\cap L^2(\bar D;{\mathbf{1}}_{\partial D}\cdot\sigma).
\]
By the classical trace theorem,  $D(\EE^{n\sigma})=H^1(D)$, so $u_n$ is a weak solution of (\ref{eq2.3}) if and only if $u_n\in D(\EE^{n\sigma})$ and
\begin{equation}
\label{eq2.6}
\EE^{n\sigma}_{\lambda}(u_n,v)=\int_Dfv\,dx+n\int_{\partial D}gv\,d\sigma,\quad v\in D(\EE^{n\sigma}).
\end{equation}
Therefore we have to show that there is a unique $u_n\in H^1(D)$ satisfying (\ref{eq2.6}). Suppose that  $u^1_n,u^2_n\in H^1(D)$ satisfy (\ref{eq2.6}) and let $u=u^1_n-u^2_n$. Then from (\ref{eq2.6}) with test function $v=u$ we get
$\EE^{n\sigma}_{\lambda}(u,u)=0$, hence that  $\EE_{\lambda}(u,u)=0$. Clearly, this
implies that $u=0$ $m$-a.e. To prove the existence and its representation,
it suffices to note that $\tilde u_n$  can be written in the form
\[
\tilde u_n=R^{nA}_{\lambda}f+nU^{\lambda}_{n,A}g,
\]
where
\[
R^{nA}_{\lambda}f(x)=E_x\int^{\infty}_0e^{-\lambda t-nA_t}f(X_t)\,dt,\qquad
U^{\lambda}_{n,A}g(x)=E_x\int^{\infty}_0e^{-\lambda t-nA_t}g(X_t)\,dA_t,
\]
and then use \cite[(6.1.5), (6.1.12)]{FOT}. Furthermore, $\tilde u_n$ is quasi-continuous because $R^{nA}_{\lambda}f$ is quasi-continuous by \cite[Lemma 5.1.5]{FOT} and $U^{\lambda}_{n,A}g$ is quasi-continuous by \cite[Lemma 6.1.3]{FOT}.
\\
(ii) With our convention,  $\tilde u$  can be equivalently written in the form
\[
\tilde u=H^{\lambda}_{\partial D}\tilde g+R^D_{\lambda}f,
\]
where
\[
H^{\lambda}_{\partial D}\tilde g(x)=E_xe^{-\lambda\tau_D}\tilde g(X_{\tau_D}),
\qquad R^D_{\lambda}f(x)=\int^{\infty}_0e^{-\lambda t}f(X^D_t)\,dt.
\]
Let $H^1_D=\{u\in H^1(D):\tilde u=0\mbox{ q.e. on }\partial D\}$. It is known (see \cite[Exercise 2.3.1]{FOT}) that $H^1_D=H^1_0(D)$. Furthermore, by \cite[Theorem 4.3.1]{FOT}, $H^{\lambda}_{\partial D}\tilde g$ is an $m$-version of  the orthogonal projection of $g$ on the orthogonal complement of the space $H^1_D$
in the Hilbert space $(H^1(D),\EE_{\lambda})$. Hence, for every $v\in H^1_0(D)$, $\EE_{\lambda}(H^{\lambda}_{\partial D}\tilde g,v)=0$.
Therefore, if $\tilde u$ is defined by (\ref{eq3.7}), then for  every $v\in H^1_0(D)$ we have
\[
\EE_{\lambda}(\tilde u,v)=\EE_{\lambda}(R^D_{\lambda}f,v)=\int_Dfv\,dx,
\]
the second equality being a consequence of \cite[Theorem 4.4.1]{FOT}, Furthermore, $\tilde u-g=\tilde u-(H^{\lambda}_{\partial D}\tilde g+g-H^{\lambda}_{\partial D}\tilde g)
=R^D_{\lambda}f-(g-H^{\lambda}_{\partial D}\tilde g)\in H^1_0(D)$ since $g-H^{\lambda}_{\partial D}\tilde g\in H^1_0(D)$ and $R^D_{\lambda}f\in H^1_0(D)$ by \cite[Theorem 4.4.1]{FOT} again. Therefore $\tilde u$ is a weak solution of (\ref{eq1.3}). Note that $\tilde u$ is quasi-continuous because $H^{\lambda}_{\partial D}\tilde g$ is quasi-continuous by \cite[Theorem 4.3.1]{FOT} and $R^D_{\lambda}f$ is quasi-continuous by \cite[Theorem 4.4.1]{FOT}.
\end{proof}

Note that since  $D$ is Lipschitz, there is the trace operator $\gamma:H^1(D)\rightarrow L^2(\partial D)$.
Therefore in Definition \ref{def3.1}(i) and Proposition \ref{prop2.4}(ii) one can assume that
$g\in H^1(D)$ and then replace $g$ by $\gamma(g)$ in (\ref{eq2.3}), (\ref{eq2.5}).

If $f\in L^p(D)$ with $p>d$,  then $R_{\lambda}|f|\in C(\bar D)$ by \cite[Theorem 2.1]{FT2}, and if $g\in \BB_b(\partial D)$, then   $nE_x\int^{\infty}_0e^{-n A_t}g(X_t)\,dA_t\le \|g\|_{\infty}E_x(1-e^{-nA_{\infty}})$, $x\in\bar D$. Therefore, under these assumptions on $f$ and $g$, the integrals on the right-hand side of (\ref{eq2.5}) are well defined for every $x\in\bar D$. Similarly,  the right-hand side of (\ref{eq3.7}) is well defined for every $x\in \bar D$.

The above remarks and Proposition \ref{prop2.4} justify the following definition of probabilistic solutions of (\ref{eq1.2}), (\ref{eq1.3}).

\begin{definition}
\label{def3.3}
Let $f\in L^p(D)$ with $p>d$ and $g\in\BB_b(\partial D)$.  The function $v_n:\bar D\rightarrow\BR$  defined by the right-hand side of \mbox{\rm(\ref{eq2.5})} is called the probabilistic solution of (\ref{eq1.2}).
The function  $v:\bar D\rightarrow\BR$ defined by the right-hand side of (\ref{eq3.7}) is  called the probabilistic solution of (\ref{eq1.3}).
\end{definition}

An equivalent definition of a probabilistic solution of (\ref{eq1.2}), resembling (\ref{eq2.3}), will be given in Proposition \ref{prop3.4} below.

For a deep study of connections between probabilistic solutions, weak solutions as well of other kind of solutions to the Dirichlet problem  with possibly irregular domain we refer the reader to \cite{K:JFA}. Here let us only note that if  $D$ is bounded and Lipschitz (as in the present paper), then it satisfies Poincare's cone condition. Therefore modifying slightly the proof of \cite[Proposition II.1.13]{B} (we use Aronson's estimates for the transition densities of $\BM$) one can show that   each point  $x\in\partial D$ is regular for $D^c$, i.e.
\begin{equation}
\label{eq3.5}
P_{x}(\tau_D=0)=1, \quad x\in\partial D.
\end{equation}
Using this, similarly to the proof of \cite[Proposition II.1.11]{B}, one can show that $H^{\lambda}_{\partial D}g\in C(\bar D)$ if $g\in C(\partial D)$. For an analytical
proof of this well known fact see, e.g., \cite{LSW}. Furthermore, it is known (see \cite[Section 9]{S} or \cite{K}) that if $f\in L^p(D)$ with $p>d$, then $R^D_{\lambda}f\in C(\bar D)$. Thus $v\in C(\bar D)$ when $f\in L^p(D)$ with $p>d$ and $g\in C(\partial D)$.
\begin{proposition}
\label{prop3.4}
Let $f\in L^p(D)$ with $p>d$ and $g\in\BB_b(\partial D)$. Then the probabilistic solution $v_n$ of \mbox{\rm(\ref{eq1.2})} is continuous. Moreover, $v_n\in C(\bar D)$ is the probabilistic solution if and only if it satisfies the equation
\begin{align}
\label{eq3.6}
v_n(x)&=R_{\lambda}(f\cdot m+n(g-v_n)\cdot\sigma)(x)\nonumber\\
&=E_x\int^{\infty}_0e^{-\lambda t}(f(X_t)\,dt+n(g-v_n)(X_t)\,dA_t),\quad x\in\bar D.
\end{align}
\end{proposition}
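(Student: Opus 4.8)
The plan is to establish two things: first, continuity of $v_n$; second, the equivalence of the probabilistic definition with the integral equation \eqref{eq3.6}.

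\textbf{Continuity of $v_n$.} I would start from the representation $v_n = R^{nA}_{\lambda}f + nU^{\lambda}_{n,A}g$ from the proof of Proposition \ref{prop2.4}(i), but now I want continuity on all of $\bar D$, not just quasi-continuity. The key identity is the resolvent-type equation relating the killed-by-$nA$ resolvent to the free resolvent $R_{\lambda}$. Writing $e^{-\lambda t - nA_t}$ via integration by parts in the additive functional $nA$, one gets
\[
v_n(x) = E_x\int_0^{\infty} e^{-\lambda t}\big(f(X_t)\,dt + n g(X_t)\,dA_t\big) - n E_x\int_0^{\infty} e^{-\lambda t} v_n(X_t)\,dA_t,
\]
which is precisely \eqref{eq3.6}; so in fact proving the equivalence and proving continuity can be done together. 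For continuity I would argue: $R_{\lambda}(f\cdot m) = R_{\lambda}f \in C(\bar D)$ since $f \in L^p(D)$, $p>d$, by \cite[Theorem 2.1]{FT2}; and for a bounded Borel function $h$ on $\partial D$, $R_{\lambda}(h\cdot\sigma)(x) = E_x\int_0^{\infty} e^{-\lambda t} h(X_s)\,dA_s$ is continuous on $\bar D$ — this should follow from the strong Feller resolvent property established in \cite{FT2} applied to the smooth measure $|h|\cdot\sigma$ (as noted in the Preliminaries, $\sigma$ is smooth in the strict sense), together with the absolute continuity condition on $p_t$; a clean route is to note $R_{\lambda}(h\cdot\sigma)$ is $\lambda$-excessive and finite, hence equals $\lim_{t\downarrow 0} P_t R_{\lambda}(h\cdot\sigma)$, and $P_t$ maps bounded functions into $C(\bar D)$. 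Actually the shortest argument: since $v_n$ is known to be the quasi-continuous version and satisfies the renewal equation, and both $R_\lambda(f\cdot m)$ and $R_\lambda(g\cdot\sigma)$ are continuous, bounded $v_n$ can be obtained as the uniform limit of Picard iterates $v_n^{(k+1)} = R_\lambda(f\cdot m + n g\cdot\sigma) - n R_\lambda(v_n^{(k)}\cdot\sigma)$; the map $w \mapsto R_\lambda(w\cdot \sigma)$ has operator norm on $\sup$-norm bounded by $\sup_x E_x\int_0^\infty e^{-\lambda t}\,dA_t = \|R_\lambda \sigma\|_\infty$, which need not be $<1$, so instead I would resolve this by the standard trick of choosing $\lambda' $ large (or rather noting one can iterate over the resolvent equation enough times) — more robustly, one uses that $n E_x\int_0^\infty e^{-\lambda t}\,dA_t \le n E_x\int_0^\infty e^{-nA_t}\,dA_t \le 1$ type bounds only give non-strict contraction, so the cleanest fix is to invoke that $\tilde v_n$ already exists and is finite q.e.\ and bounded, so it is a fixed point, and then $v_n = R_\lambda(f\cdot m + n(g - v_n)\cdot\sigma)$ with the right side manifestly in $C(\bar D)$ because $f \in L^p$, $g \in \BB_b(\partial D)$, and $v_n|_{\partial D} \in \BB_b(\partial D)$.

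\textbf{The integral equation.} For the ``only if'' direction I would take $v_n$ the probabilistic solution defined by \eqref{eq2.5} and apply the integration-by-parts / Markov-property computation sketched above: writing $e^{-nA_t} = 1 - n\int_0^t e^{-nA_s}\,dA_s$ and using Fubini together with the Markov property at time $s$ on the inner expectation, one peels off exactly the term $n E_x\int_0^\infty e^{-\lambda s} v_n(X_s)\,dA_s$, yielding \eqref{eq3.6}. Here one must justify the use of the Markov property for additive functionals in the strict sense and the finiteness needed for Fubini; this is where the boundedness of $v_n$ on $\bar D$ (just proved) and the finiteness $E_x\int_0^\infty e^{-\lambda t}\,dA_t < \infty$ for all $x$ (from $\sigma$ being smooth in the strict sense and $R_\lambda \sigma \in C(\bar D)$) are used. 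For the ``if'' direction, suppose $w \in C(\bar D)$ solves \eqref{eq3.6}; then $w - v_n$ is bounded, Borel, and satisfies $(w - v_n)(x) = -n E_x\int_0^\infty e^{-\lambda t}(w-v_n)(X_t)\,dA_t$, i.e.\ $h := w - v_n$ satisfies $h = -nR_\lambda(h\cdot\sigma)$. Iterating this identity $k$ times gives $\|h\|_\infty \le (n\|R_\lambda\sigma\|_\infty)^k\|h\|_\infty$ only if the norm is $<1$; to avoid that I would instead run the Picard iteration on the subprocess killed at rate $n$ by $A$: the operator $G_n h := n E_x\int_0^\infty e^{-\lambda t - nA_t}(\cdots)$ is the genuine contraction since $n E_x\int_0^\infty e^{-nA_t}\,dA_t = E_x(1 - e^{-nA_\infty}) \le 1$ with equality only when $A_\infty = \infty$ $P_x$-a.s. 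The standard resolution, which I expect is the intended one, is: uniqueness of the probabilistic solution is already built into it being the (q.e.\ unique, continuous) version, so if $w$ solves \eqref{eq3.6} then reversing the integration-by-parts identity shows $w$ also satisfies \eqref{eq2.5}, hence $w = v_n$ everywhere by continuity of both and q.e.\ uniqueness plus the absolute continuity condition.

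\textbf{Main obstacle.} The delicate point is not the algebra of the renewal equation but the functional-analytic bookkeeping: showing $R_\lambda(h\cdot\sigma) \in C(\bar D)$ for bounded Borel $h$ on $\partial D$, and justifying that the integration-by-parts manipulation with the additive functional $A$ (with its jump-free, strict-sense construction) together with the Markov property is legitimate for every starting point $x \in \bar D$, not merely $m$-a.e.\ or q.e. This is exactly where the paper must lean on the strong Feller resolvent of \cite{FT2} and the absolute continuity condition $p_t(x,\cdot)\ll m$ — without the latter one only gets statements q.e. I would therefore organize the proof so that every ``for every $x$'' claim is traced back to: (a) $R_\lambda f \in C(\bar D)$ for $f\in L^p$, $p>d$; (b) $R_\lambda(h\cdot\sigma)\in C(\bar D)$ for $h\in\BB_b(\partial D)$; (c) the already-proven equality $R_\alpha(g\cdot\sigma)(x) = E_x\int_0^\infty e^{-\alpha t} g(X_s)\,dA_s$ for all $x$; and (d) the identity $e^{-nA_t} = 1 - n\int_0^t e^{-nA_s}\,dA_s$ with Fubini and the Markov property — reducing everything else to routine estimates.
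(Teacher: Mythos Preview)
Your route is genuinely different from the paper's, and mostly works, but there is one concrete slip and one loose end.

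\textbf{Comparison with the paper.} The paper does \emph{not} derive \eqref{eq3.6} directly from the definition of $v_n$. Instead it starts from the weak solution $u_n$ of Proposition~\ref{prop2.4}, defines $w_n:=R_\lambda(f\cdot m+n(g-\tilde u_n)\cdot\sigma)$, shows $w_n\in C(\bar D)$ by \cite[Theorem~2.1]{FT2}, checks $w_n=\tilde u_n$ q.e.\ via the variational identity, and only then identifies $w_n$ with $v_n$ by rewriting \eqref{eq3.6} as a BSDE (equation~(3.8)) and integrating $e^{-nA_t}Y^n_t$ by parts. So the paper leans on analytic existence/uniqueness plus a martingale argument; you bypass the weak solution entirely and try to get \eqref{eq3.6} from the Markov property and a renewal identity. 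Your approach is more self-contained probabilistically; the paper's buys uniqueness for free from the variational formulation.

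\textbf{The slip.} With the identity $e^{-nA_t}=1-n\int_0^t e^{-nA_s}\,dA_s$ as you wrote it, Fubini plus the Markov property at time $s$ peels off $R_\lambda f(X_s)+nR_\lambda(g\cdot\sigma)(X_s)$, \emph{not} $v_n(X_s)$, because the inner integral $\int_s^\infty e^{-\lambda t}(f\,dt+ng\,dA_t)$ carries no factor $e^{-nA}$. To land on $v_n(X_s)$ you need the other form
\[
1-e^{-nA_t}=n\int_0^t e^{-n(A_t-A_s)}\,dA_s,
\]
so that after Fubini the inner integral is $\int_s^\infty e^{-\lambda t-n(A_t-A_s)}(f\,dt+ng\,dA_t)$, and additivity $A_t-A_s=A_{t-s}\circ\theta_s$ plus Markov gives $e^{-\lambda s}v_n(X_s)$. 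This is a one-line fix, but as written your displayed derivation does not yield \eqref{eq3.6}.

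\textbf{The loose end.} For the ``if'' direction you correctly observe that $nR_\lambda(\cdot\,\sigma)$ need not contract in sup norm, and you fall back on ``reversing the integration by parts''. That reversal is exactly what the paper does, but it requires knowing that $t\mapsto w(X^\lambda_t)+\int_0^t f(X^\lambda_s)\,ds+n\int_0^t(g-w)(X^\lambda_s)\,dA_s$ is a $P_x$-martingale for every $x$ (the content of the BSDE~(3.8)), after which integrating $e^{-nA_t}w(X^\lambda_t)$ by parts and letting $t\to\infty$ gives $w=v_n$. You do not supply this martingale step; ``reversing'' the renewal identity alone does not do it, because the renewal identity you used went from the killed formula to \eqref{eq3.6}, not the other way. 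So either import the paper's BSDE/martingale argument here, or give a separate uniqueness proof---for instance, iterate $h=-nR_\lambda(h\cdot\sigma)$ and use the computation above (with $f=0$, $g=0$) to identify the $k$-fold iterate with $n^kE_x\!\int\!\cdots\!\int e^{-\lambda s_k}\,dA_{s_1}\cdots dA_{s_k}\,h(X_{s_k})$, whose absolute value is bounded by $\|h\|_\infty\,E_x\frac{(nA_\infty^\lambda)^k}{k!}$ with $A^\lambda_\infty:=\int_0^\infty e^{-\lambda s}\,dA_s$; this tends to $0$ since $E_xe^{cA^\lambda_\infty}<\infty$ for small $c$ by Khas'minskii's lemma and the boundedness of $R_\lambda\sigma$.
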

\begin{proof}
Define $u_n,\tilde u_n$  as in Proposition \ref{prop2.4} and set
\begin{align}
\label{eq4.1}
w_n(x)&=R_{\lambda}(f\cdot m+n(g-\tilde u_n)\cdot\sigma)(x)\nonumber\\
&=\int_{D}r_{\lambda}(x,y)f(y)\,dy
+n\int_{\partial D}r_{\lambda}(x,y)(g-\tilde u_n)(y)\,\sigma(dy),\quad x\in\bar D.
\end{align}
By the remarks following the proof of Proposition \ref{prop2.4}, $w_n(x)$ is well defined and finite for each $x\in\bar D$. Moreover, there is $C>0$ such that $|\tilde u_n|\le C$  q.e. Since $\sigma$ is smooth, $|\tilde u_n|\le C$ $\sigma$-a.e. on $\partial D$. From this and  \cite[Theorem 2.1]{FT2} it follows that in fact $w_n\in C(\bar D)$.
For every $v\in H^1(D)$ we have
\[
\EE_{\lambda}(w_n,v)=(f,v)+n\int_{\partial D}(g-\tilde u_n)v\,d\sigma
=(f,v)+n\int_{\partial D}(g-u_n)v\,d\sigma.
\]
By this and (\ref{eq2.3}), $\EE_{\lambda}(w_n,v)=\EE_{\lambda}(u_n,v)$, $v\in H^1(D)$, which implies that $w_n=u_n$ $m$-a.e., and hence $w_n=\tilde u_n$ q.e. on $\bar D$. From this and (\ref{eq4.1}) it follows that $w_n$ is a continuous solution of (\ref{eq3.6}).
It is the probabilistic solution of (\ref{eq1.2}). To see this, we first note that (\ref{eq3.6}), with $v_n$ replaced by $w_n$,  can be equivalently written as
\begin{equation}
\label{eq3.9}
w_n(x)=E_x\int^{\infty}_0(f(X^{\lambda}_t)\,dt+n(g-w_n)(X^{\lambda}_t)\,dA_t),
\quad x\in \bar D.
\end{equation}
Since the integrals $E_x\int^{\infty}_0|f(X^{\lambda}_t)|\,dt$, $E_x\int^{\infty}_0|g-w_n|(X^{\lambda}_t)\,dA_t$
exist and are finite for each $x\in \bar D$, in much the same way as
in \cite[Remark 3.3(ii)]{KR:NoD}  we show that there is a martingale additive
functional $M$ such that for each $x\in\bar D$ the pair $(Y^n,M)$, where $Y^n_t=w_n(X^{\lambda}_t)$, $t\ge0$, is a solution of the backward stochastic
differential equation
\begin{equation}
\label{eq3.8}
Y^n_t=\int^{\infty}_tf(X^{\lambda}_s)\,ds
+n\int^{\infty}_t(g(X^{\lambda}_s)-Y^n_s)\,dA_s
-\int^{\infty}_tdM_s, \quad t\ge0, \quad P_x\mbox{-a.s.}
\end{equation}
Integrating by parts, we get
\[
e^{-n A_T}Y^n_T-Y^n_0=-n\int^T_0e^{-n A_t}Y^n_t\,dA_t
+\int^T_0e^{-n A_t}\,dY^n_t,\quad T>0.
\]
Hence
\[
E_xY^n_0=E_xe^{-nA_T}Y^n_T+\int^T_0e^{-nA_t}(f(X^{\lambda}_t)\,dt
+ng(X^{\lambda}_t)\,dA_t).
\]
Letting $T\rightarrow\infty$ gives
\begin{align*}
w_n(x)=E_xY^n_0&=E_x\int^{\infty}_0e^{-nA_t}(f(X^{\lambda}_t)\,dt
+ng(X^{\lambda}_t)\,dA_t)\\
&=E_x\int^{\infty}_0e^{-\lambda t-nA_t}(f(X_t)\,dt+ng(X_t)\,dA_t)=v_n(x)
\end{align*}
for every $x\in \bar D$. This shows that $v_n$ is continuous and satisfies (\ref{eq3.6}), and moreover, any continuous solution of (\ref{eq3.9}) coincides with $v_n$.
\end{proof}

Note that (\ref{eq3.6}) is a very special case of equation with smooth measure data and (\ref{eq3.8}) is the corresponding backward stochastic differential equation (BSDE). More general, semilinear equations of the form (\ref{eq3.6}), (\ref{eq3.8}) are considered in  \cite{KR:PA}. Note also that  one can prove the existence of a quasi-continuous $v_n$ satisfying (\ref{eq3.6}) for q.e. $x\in\bar D$ by solving the corresponding BSDE, i.e. by probabilistic methods (we do not need to know in advance that there is a weak solution $u_n$). For a general result of this kind see \cite[Theorem 4.3]{KR:PA}.

\section{A convergence result}

Recall that $A$ is an additive functional (AF in abbreviation) of $\BM$ in the strict sense with Revuz measure $\sigma$.  We denote by $F_A$ the support of $A$, i.e.
\[
F_A=\{x\in\bar D:P_x(A_t>0\mbox{\rm\ for all }t>0)=1\}.
\]

\begin{lemma}
\label{lem4.1} $P_x(A_{t\wedge\tau_D}=0,\,t\ge0)=1$ and $P_x(A_{t+\tau_D}>0,\,t>0)=1$ for every $x\in\bar D$.
\end{lemma}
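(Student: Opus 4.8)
The statement has two parts: before the process first exits $D$, the additive functional $A$ does not increase, and immediately after the exit time $\tau_D$ it does increase. The first part should follow from the fact that the Revuz measure $\sigma$ of $A$ is carried by $\partial D$, so $A$ can only increase on the set of times $s$ when $X_s\in\partial D$; more precisely, since $A$ is a PCAF in the strict sense with Revuz measure $\sigma$ and $\mathbf{1}_D\cdot\sigma=0$, one has $A_t=\int_0^t\mathbf{1}_{\partial D}(X_s)\,dA_s$ for all $t$, $P_x$-a.s. (this is the remark surrounding (\ref{eq2.4})). On the stochastic interval $[0,\tau_D)$ we have $X_s\in D$, hence $\mathbf{1}_{\partial D}(X_s)=0$, and therefore $A_{t\wedge\tau_D}=\int_0^{t\wedge\tau_D}\mathbf{1}_{\partial D}(X_s)\,dA_s=0$ for every $t\ge0$, $P_x$-a.s.

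\textbf{The second part} is the harder one. I would argue as follows. By the strong Markov property applied at $\tau_D$, conditionally on $\mathcal{F}_{\tau_D}$ the post-$\tau_D$ process $(X_{\tau_D+t})_{t\ge0}$ is distributed as $\BM$ started from $X_{\tau_D}$, and (by additivity of $A$) $A_{\tau_D+t}-A_{\tau_D}$ is the additive functional of that shifted process. So it suffices to show $P_y(A_t>0\text{ for all }t>0)=1$ for $y=X_{\tau_D}$; since $X_{\tau_D}\in\partial D$, $P_x$-a.s.\ (each boundary point being regular for $D^c$, cf.\ (\ref{eq3.5}), so $X$ is at the boundary at time $\tau_D$), it is enough to prove $P_y(A_t>0\text{ for all }t>0)=1$ for \emph{every} $y\in\partial D$. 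In the notation of the lemma this says precisely that $\partial D\subset F_A$, i.e.\ every boundary point belongs to the (fine) support of $A$.

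\textbf{To establish $\partial D\subset F_A$}, recall that the support $F_A$ of a PCAF is the complement of the largest finely open set of zero Revuz measure, equivalently $F_A=\{y:P_y(\sigma_{F_A}=0)=1\}$ where $\sigma_{F_A}$ is the hitting time of $F_A$; by \cite[Theorem 5.1.5]{FOT} (or \cite[Theorem 4.1.3]{FOT}) the support of $A$ equals the \emph{quasi-support} of its Revuz measure $\sigma$. Thus I must check that the quasi-support of the surface measure $\sigma$ is all of $\partial D$. This should hold because $D$ is Lipschitz: $\sigma$ charges every (relatively open) portion of $\partial D$ with positive measure, and such portions are not $\EE$-polar, so no nonempty finely open subset of $\partial D$ can have $\sigma$-measure zero; hence $F_A\supset\partial D$ up to an exceptional set, and then using the absolute continuity condition on $p_t$ (which upgrades ``q.e.''\ statements about hitting probabilities to ``every $x$'') together with \cite[Lemma 5.1, Theorem 5.1]{FT2} (where $\sigma$ is shown to be smooth \emph{in the strict sense}) one gets $F_A=\partial D$ exactly. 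Alternatively, and perhaps more cleanly, one can invoke directly that $A$ has support in $\partial D$ (stated in the text before (\ref{eq2.4})) together with the strict-sense smoothness of $\sigma$ from \cite{FT2} to identify $F_A$ with $\partial D$ and conclude.

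\textbf{Main obstacle.} The genuinely delicate point is the second assertion: ruling out that, starting from a boundary point $y\in\partial D$, the functional $A$ stays at $0$ on a positive time interval — i.e.\ proving $y\in F_A$ for \emph{every} (not just q.e.) $y\in\partial D$. This is exactly where the strong Feller / absolute continuity properties of $\BM$ from \cite{FT1,FT2} and the strict-sense smoothness of $\sigma$ are essential, since they are what remove the exceptional set and make the identification $F_A=\partial D$ hold pointwise. Everything else — the identity $A_t=\int_0^t\mathbf{1}_{\partial D}(X_s)\,dA_s$, the strong Markov reduction, and $X_{\tau_D}\in\partial D$ — is routine given the preliminaries.
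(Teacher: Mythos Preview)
Your overall plan is correct and matches the paper's: Part~1 follows from $A_t=\int_0^t\mathbf{1}_{\partial D}(X_s)\,dA_s$ (this is exactly what the paper establishes first, via \cite[Lemma 5.1.11, Theorem 5.1.5]{FOT}), and Part~2 reduces, by the strong Markov property at $\tau_D$ and $X_{\tau_D}\in\partial D$, to showing $\partial D\subset F_A$, i.e.\ $F_A=\partial D$.

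Where you diverge from the paper is in the argument for $F_A=\partial D$. Your direct route (``$\sigma$ charges every relatively open portion of $\partial D$, such portions are not polar, hence no nonempty finely open subset of $\partial D$ can have $\sigma$-measure zero'') is not quite complete as stated: a finely or quasi-open set meeting $\partial D$ need not contain a relatively open portion of $\partial D$, so the passage from ``$\sigma$ charges relatively open pieces'' to ``the quasi-support of $\sigma$ is all of $\partial D$'' needs more justification than you give. Your ``alternative'' at the end only yields $F_A\subset\partial D$, which is the easy inclusion. The paper sidesteps this difficulty by a comparison with reflecting Brownian motion: in the special case $a=\tfrac12 I$ one reads off from the Skorokhod decomposition (\ref{eq2.9}) and the regularity $P_y(\bar\tau_D=0)=1$, $y\in\partial D$, that the boundary local time $\bar A$ must increase immediately from every $y\in\partial D$ (otherwise $X$ would coincide with $B$ near $t=0$ and leave $\bar D$), so $F_{\bar A}=\partial D$ exactly. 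The ellipticity bounds (\ref{eq1.1}) then give the two-sided estimate $2\Lambda^{-1}\mathrm{Cap}\le\mathrm{Cap}_L\le 2\Lambda\,\mathrm{Cap}$, so the quasi-support of $\sigma$ is the same for both forms, and two applications of \cite[Theorem 5.1.5]{FOT} yield $F_A=F_{\bar A}=\partial D$. This RBM comparison via equivalence of capacities is the one non-obvious idea in the paper's proof and is precisely what lets it avoid analysing quasi-open subsets of $\partial D$ directly.
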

\begin{proof}
In view of (\ref{eq3.5}), the first part of the lemma is trivial for $x\in\partial D$. To show it for $x\in D$, we denote  by $F$ the quasi-support of $\sigma$. We may and will assume that $F\subset\partial D$ (see \cite[p. 190]{FOT}). Since $A$ is an AF in the strict sense,
by  \cite[Lemma 5.1.11]{FOT} we  have $P_x(A_t=(\fch_{F_A}\cdot A)_t,t>0)=1$ for every $x\in \bar D$, where $(\fch_{F_A}\cdot A)_t=\int^t_0\fch_{F_A}(X_s)\,dA_s$, $t\ge0$. By \cite[Theorem 5.1.5]{FOT}, $F_A=F$, so $P_x(A_t=(\fch_{F}\cdot A)_t,t>0)=1$ for every $x\in \bar D$. Since $F\subset\partial D$, it follows that for  $x\in D$, $A_t=0$ $P_x$-a.s. on $[0,\tau_D)$. Since $A$ is continuous, in fact $A_t=0$ $P_x$-a.s. on $[0,\tau_D]$ for $x\in D$, which proves the first part of the lemma.
Let $B$ be a standard Brownian motion appearing in (\ref{eq2.9}). We have  $P_{y}(\bar\tau_D=0)=1$ for $y\in\partial D$,  where $\bar\tau_D=\inf\{t>0:B_t\notin D\}$. From this, (\ref{eq2.9}) and the fact that the reflecting Brownian motion  is a diffusion with sample paths in $\bar D$ it follows that the support of the additive functional  appearing in (\ref{eq2.9}), which we denote for the moment  by $\bar A$, equals $\partial D$.
Let $\mbox{Cap}_L$ denote the capacity associated with $\EE$ and
Cap the capacity associated with $\BD$ (see \cite[Section 2.1]{FOT} for the definitions). Assumption (\ref{eq1.1}) implies
that $2\lambda^{-1}\mbox{Cap}\le\mbox{Cap}_L\le 2\lambda\mbox{Cap}$. Therefore $F$ is a quasi-support of $\sigma$ considered as a smooth measure with respect to $\mbox{Cap}_L$ if and only if it is a quasi-support of $\sigma$ considered as a smooth measure with respect to Cap. By what has already been proved and \cite[Theorem 5.1.5]{FOT},  $F=F_{\bar A}=\partial D$, so by \cite[Theorem 5.1.5]{FOT} again, $F_A=\partial D$. From this and the definition of $F_A$ we get the second part of the lemma.
\end{proof}


\begin{theorem}
\label{th4.2}
Assume that $f\in L^p(D)$ with $p>d$ and $g\in C(\partial D)$.
Then $v_n(x)\rightarrow v(x)$ for every $x\in\bar D$.
\end{theorem}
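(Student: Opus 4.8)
The plan is to use the stochastic representations
\[
v_n(x)=E_x\int^{\infty}_0e^{-\lambda t-nA_t}\bigl(f(X_t)\,dt+ng(X_t)\,dA_t\bigr),
\qquad
v(x)=E_x\Bigl(e^{-\lambda\tau_D}g(X_{\tau_D})+\int^{\tau_D}_0e^{-\lambda t}f(X_t)\,dt\Bigr),
\]
and to show that each of the two pieces of $v_n$ converges to the corresponding piece of $v$ for a fixed $x\in\bar D$. The key observation is Lemma \ref{lem4.1}: since $A_{t\wedge\tau_D}=0$ and $A_{t+\tau_D}>0$ for $t>0$ $P_x$-a.s., the weight $e^{-nA_t}$ stays equal to $1$ on $[0,\tau_D]$ and drops to $0$ (as $n\to\infty$) on $(\tau_D,\infty)$; heuristically $e^{-nA_t}\to\fch_{\{t\le\tau_D\}}$ pointwise in $t$, $P_x$-a.s. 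This is exactly the mechanism that should turn the Robin representation into the Dirichlet one as $n\to\infty$.

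First I would treat the $f$-term. Write $R^{nA}_\lambda f(x)=E_x\int_0^{\tau_D}e^{-\lambda t}f(X_t)\,dt+E_x\int_{\tau_D}^{\infty}e^{-\lambda t-nA_t}f(X_t)\,dt$, using $A_t=0$ on $[0,\tau_D]$. The first summand is already $R^D_\lambda f(x)$; for the second, $|e^{-\lambda t-nA_t}f(X_t)|\le e^{-\lambda t}|f(X_t)|$ on $(\tau_D,\infty)$, which is $E_x$-integrable because $R_\lambda|f|(x)<\infty$ (indeed $R_\lambda|f|\in C(\bar D)$ since $f\in L^p(D)$, $p>d$), and $e^{-nA_t}\to 0$ for every $t>\tau_D$ $P_x$-a.s. by the second part of Lemma \ref{lem4.1}; so dominated convergence kills this term. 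Next the boundary term $n\,U^\lambda_{n,A}g(x)=nE_x\int_0^\infty e^{-\lambda t-nA_t}g(X_t)\,dA_t=nE_x\int_{\tau_D}^\infty e^{-\lambda t-nA_t}g(X_t)\,dA_t$ (again using $dA=0$ before $\tau_D$). I would show this converges to $E_x\bigl(e^{-\lambda\tau_D}g(X_{\tau_D})\bigr)$. The natural route is an integration-by-parts / change-of-variables in $A$: on $[\tau_D,\infty)$ set $B_s=A_{\tau_D+s}$, write $n\int_0^\infty h(s)e^{-nB_s}\,dB_s=\int_0^\infty h(s)\,d(1-e^{-nB_s})$ where $h(s)=e^{-\lambda(\tau_D+s)}g(X_{\tau_D+s})$, and note $1-e^{-nB_s}\uparrow\fch_{\{s>0\}}$ as $n\to\infty$ (by $B_s>0$ for $s>0$); since $h$ has right limit $h(0+)=e^{-\lambda\tau_D}g(X_{\tau_D})$ by right-continuity of $X$ and continuity of $g$ on $\partial D$ (with $X_{\tau_D}\in\partial D$), the Stieltjes integral $\int_0^\infty h(s)\,d(1-e^{-nB_s})$ concentrates its mass near $s=0$ and tends to $h(0+)$. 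A clean way to make this rigorous is to compare with the $g\equiv 1$ case, where $nE_x\int_{\tau_D}^\infty e^{-\lambda t-nA_t}\,dA_t$ can be computed (it equals $E_x\int_{\tau_D}^\infty (\lambda/n\cdots)$, or more simply one gets $1-e^{-nA_\infty}\to 1$ after handling the $e^{-\lambda t}$ factor), and then estimate $|g(X_{\tau_D+s})-g(X_{\tau_D})|$ uniformly small for $s$ in a small random interval using uniform continuity of $g$ and right-continuity of paths, with the tail mass over $\{s\ge\varepsilon\}$ vanishing because $B_\varepsilon>0$ forces $e^{-nB_\varepsilon}\to 0$.

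Combining the two limits: $v_n(x)=R^{nA}_\lambda f(x)+nU^\lambda_{n,A}g(x)\to R^D_\lambda f(x)+E_x e^{-\lambda\tau_D}g(X_{\tau_D})=v(x)$ for every $x\in\bar D$. Note that for $x\in\partial D$ this is consistent since $\tau_D=0$ $P_x$-a.s. by \eqref{eq3.5}, so $v(x)=g(x)$ and one checks $v_n(x)\to g(x)$ as well. The boundedness hypothesis $g\in C(\partial D)$ (hence bounded) is used throughout for the dominating functions, and $f\in L^p(D)$, $p>d$, is used only through $R_\lambda|f|\in C(\bar D)\subset L^\infty$.

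The main obstacle I expect is the boundary term: making the informal statement ``$n\,e^{-nA_t}\,dA_t$ converges weakly to the point mass of $A$ at $\tau_D$'' precise and then transferring it, against the continuous integrand $e^{-\lambda t}g(X_t)$, to the value $e^{-\lambda\tau_D}g(X_{\tau_D})$ at the first exit time. The subtlety is that $A$ increases only on $\partial D$ and the relevant local behaviour of the path $X$ just after $\tau_D$ has to be controlled $P_x$-a.s.; this is precisely where the second assertion of Lemma \ref{lem4.1} (that $A$ starts increasing immediately at $\tau_D$) is essential, and where one must be careful not to need any more than right-continuity of $X$ and the fact that $X_{\tau_D}\in\partial D$. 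The $f$-term, by contrast, is a routine dominated-convergence argument given Lemma \ref{lem4.1} and $R_\lambda|f|\in C(\bar D)$.
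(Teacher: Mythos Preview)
Your treatment of the $f$-term is identical to the paper's: split at $\tau_D$, use $A_t=0$ on $[0,\tau_D]$ and dominated convergence (with majorant $e^{-\lambda t}|f(X_t)|$, integrable since $R_\lambda|f|\in C(\bar D)$) on $(\tau_D,\infty)$.

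For the boundary term your route is correct but genuinely different from the paper's. The paper does \emph{not} argue pathwise that the measures $n\,e^{-nA_t}\,dA_t$ concentrate at $\tau_D$. Instead it approximates $g$ uniformly by $g_k\in D(\hat L)$ (the generator domain of the Feller semigroup on $C(\bar D)$), uses the semimartingale decomposition $g_k(X_t)=g_k(X_0)+\int_0^t \hat Lg_k(X_s)\,ds+M^{g_k}_t$ to integrate $e^{-\lambda t-nA_t}$ against $d\,g_k(X_t)$ by parts, and lets $n\to\infty$ using dominated convergence on the two $ds$-integrals (which contain bounded integrands $g_k,\hat Lg_k$); the error $|g_k-g|$ is controlled by the inequality $n\int_{\tau_D}^\infty e^{-\lambda s-nA_s}\,dA_s\le e^{-\lambda\tau_D}$. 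Your approach is more elementary in that it avoids the density of $D(\hat L)$ in $C(\bar D)$ and any martingale machinery, relying only on path continuity and the uniform continuity of $g$ on the compact $\partial D$; the paper's approach, on the other hand, never needs to localize the Stieltjes integral or invoke right-continuity of $s\mapsto g(X_{\tau_D+s})$ on the support of $dA$. Both hinge equally on the two assertions of Lemma~\ref{lem4.1}. One small point to make explicit in your write-up: since $g$ is defined only on $\partial D$, either extend it to $\bar g\in C(\bar D)$ by Tietze (this does not change the $dA$-integral by \eqref{eq2.4}) or restrict your continuity estimate to times $s$ with $X_{\tau_D+s}\in\partial D$, which suffices because $dA$ is carried by $\{X\in\partial D\}$. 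After the $P_x$-a.s.\ convergence, the passage to $E_x$ is by dominated convergence with constant bound $\|g\|_\infty$, as you note.
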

\begin{proof}
Recall that $v_n$ is defined by the right-hand side of (\ref{eq2.5}). First assume that $x\in D$. By Lemma \ref{lem4.1} and the dominated convergence theorem, for $x\in D$, we have
\begin{align}
\label{eq3.1}
E_x\int^{\infty}_0e^{-\lambda t-nA_t}f(X_t)\,dt
&=E_x\int^{\tau_D}_0e^{-\lambda t}f(X_t)\,dt
+E_x\int^{\infty}_{\tau_D}e^{-\lambda t-nA_t}f(X_t)\,dt\nonumber\\
&\quad\rightarrow E_x\int^{\tau_D}_0e^{-\lambda t}f(X_t)\,dt=R^D_{\lambda}f(x)
\end{align}
as $n\rightarrow\infty$. We are going to show that for every $x\in D$,
\begin{align}
\label{ew4.2}
nE_x\int^{\infty}_0e^{-\lambda t-nA_t}g(X_t)\,dA_t
&=nE_x\int^{\infty}_{\tau_D}e^{-\lambda t-nA_t}g(X_t)\,dA_t\nonumber\\
&\quad\rightarrow E_xe^{-\lambda\tau_D}g(X_{\tau_D})=H^{\lambda}_{\partial D}g(x)
\end{align}
as $n\rightarrow\infty$. We know that $(P_t)_{t>0}$ is a strongly Feller semigroup on $C(\bar D)$. Let
$(\hat L,D(\hat L))$ denote its generator. Since $D(\hat L)$ is dense in $C(\bar D)$, one can choose a sequence $\{g_k\}\subset D(\hat L)$ such that $\sup_{x\in\bar D}|g_k-g|\le k^{-1}$. By \cite[Theorem 3.6.5]{J3}, $g_k(X)$ is a semimartingale under $P_x$ for $x\in \bar D$. In fact,
\[
M^{g_k}_t:=g_k(X_t)-g_k(X_0)-\int^t_0(\hat Lg_k)(X_s)\,ds,\quad t\ge0,
\]
is a martingale under $P_x$ for $x\in \bar D$. Integrating by parts, for all $k\ge1$ and $t\ge0$ we obtain
\begin{align*}
&e^{-\lambda( t+\tau_D)-nA_{t+\tau_D}}g_k(X_t)
-e^{-\lambda\tau_D-nA_{\tau_D}}g_k(X_{\tau_D})\\
&\quad=-\int^{t+\tau_D}_{\tau_D}e^{-\lambda s-nA_s}g_k(X_s)\,d(\lambda s+nA_s)
+\int^{t+\tau_D}_{\tau_D}e^{-\lambda s-nA_s}\,dg_k(X_s) \\
&\qquad +\int^{t+\tau_D}_{\tau_D}e^{-\lambda s-nA_s}\,dM^{g_k}_s.
\end{align*}
Since $e^{-\lambda t-nA_t}\rightarrow0$ as $t\rightarrow\infty$ and $A_{\tau_D}=0$ $P_x$-a.s., we get
\begin{align*}
nE_x\int^{\infty}_{\tau_D}e^{-\lambda s-nA_s}g_k(X_s)\,dA_s &=E_xe^{-\lambda\tau_D}g_k(X_{\tau_D})
-\lambda E_x\int^{\infty}_{\tau_D}e^{-\lambda s-nA_s}g_k(X_s)\,ds\nonumber\\
&\quad+E_x\int^{\infty} _{\tau_D}e^{-\lambda s-nA_s} (\hat Lg_k)(X_s)\,ds.
\end{align*}
Since $g_k,\hat Lg_k\in C(\bar D)$,  
applying Lemma \ref{lem4.1} and the dominated convergence theorem shows that the   second and third term on the right-hand side of the above equality  converge to zero as $n\rightarrow\infty$.   This proves that
\begin{equation}
\label{eq3.2}
n E_x\int^{\infty}_0e^{-\lambda s-nA_s}g_k(X_s)\,dA_s
\rightarrow E_xe^{-\lambda\tau_D}g_k(X_{\tau_D}).
\end{equation}
Furthermore,
\[
n\int^{\infty}_{\tau_D}e^{-\lambda s-nA_s}\,dA_s\le ne^{-\lambda\tau_D}\int^{\infty}_0e^{-n A_s}\,dA_s= e^{-\lambda\tau_D}(1-e^{-nA_{\infty}}),
\]
so
\begin{equation}
\label{eq3.3}
n E_x\int^{\infty}_{\tau_D}e^{-\lambda s-nA_s}|g_k-g|(X_s)\,dA_s
\le k^{-1}E_xe^{-\lambda\tau_D}.
\end{equation}
Clearly, we also have
\begin{equation}
\label{eq3.4}
E_xe^{-\lambda\tau_D}|g_k-g|(X_{\tau_D})\le k^{-1}.
\end{equation}
From  (\ref{eq3.2})--(\ref{eq3.4}) we get (\ref{ew4.2}), which together with (\ref{eq3.1}) shows the desired convergence for $x\in D$.
Since $P_x(\tau_D=0)=1$ for $x\in\partial D$,
the above arguments also show that $v_n(x)\rightarrow E_xg(X_0)=g(x)=v(x)$ for $x\in\partial D$,  which completes the proof.
\end{proof}

\begin{remark}
Let $f\in L^2(D)$, $g\in C(\partial D)$ and $\tilde u_n,\tilde u$  be defined as in Proposition \ref{prop2.4}. Then $\tilde u_n\rightarrow u$ q.e. because the proof of Theorem \ref{th4.2} shows that then (\ref{eq3.1}) holds for q.e. $x\in D$ and  (\ref{ew4.2}) holds for every $x\in D$. In particular, if $f\in L^2(D)$ and $g\in H^1(D)\cap C(\partial D)$, then $\{u_n\}$ converges q.e. to the weak solution $u$ of (\ref{eq1.3}). If $f\in L^2(D),g\in H^1(D)$, then the convergence holds in $H^1(D)$ and hence a.e. For an analytical proof of this fact we refer the reader to \cite[Appendix I, Section 4.4]{G}.
\end{remark}

\end{document}